\documentclass[11pt]{amsart}

\usepackage{amsmath}
\usepackage{amsfonts}
\usepackage{amssymb}
\usepackage{amsthm}
\usepackage{mathptmx}
\usepackage{mathrsfs}
\usepackage{latexsym}
\usepackage{times}
\usepackage[mathscr]{euscript}
\usepackage[isolatin]{inputenc}

\usepackage[pagebackref]{hyperref}
\usepackage[backrefs,alphabetic,initials]{amsrefs}

\DeclareMathAlphabet{\mathpzc}{OT1}{pzc}{m}{it}


\newtheorem{thm}{Theorem}[section]
\newtheorem{lem}[thm]{Lemma}
\newtheorem{prop}[thm]{Proposition}
\newtheorem{cor}[thm]{Corollary}

\theoremstyle{definition}
\newtheorem{defn}[thm]{Definition}
\newtheorem{ex}[thm]{Example}

\theoremstyle{remark}
\newtheorem{rem}[thm]{Remark}
\setcounter{section}{-1}

\usepackage{color}


\newcommand{\spk}{\mathfrak{sp}}

\newcommand{\ok}{\mathfrak{o}}

\newcommand{\zk}{\mathfrak{z}}

\newcommand{\g}{\mathfrak{g}}

\newcommand{\lk}{\mathfrak{l}}

\newcommand{\qk}{\mathfrak{q}}

\newcommand\CC{\mathbb C}

\newcommand\PP{\mathbb P}

\newcommand{\Zs}{\mathscr Z}

\newcommand{\ad}{\operatorname{ad}}

\newcommand{\im}{\operatorname{Im}}

\newcommand{\End}{\operatorname{End}}
\newcommand{\Der}{\operatorname{Der}}

\renewcommand\hat\widehat
\renewcommand\tilde\widetilde 
\newcommand{\spa}{\operatorname{span}}

\newcommand{\OO}{\operatorname{O}}



\newcommand{\oplusp}{{ \ \overset{\perp}{\mathop{\oplus}} \ }}

\newcommand{\ps}{\PP^1}

\begin{document}

\date{\today}

\title[Solvable quadratic Lie algebras in low dimensions]{Solvable quadratic Lie algebras in low dimensions}
\author{Tien Dat Pham, Anh Vu Le, Minh Thanh Duong}

\address{Department of Physics, Ho Chi Minh city University of Pedagogy, 280 An Duong Vuong, Ho Chi Minh city, Vietnam.}

\email{thanhdmi@hcmup.edu.vn}
\email{vula@uel.edu.vn}

\keywords{Solvable Lie algebras. Quadratic Lie algebras. Double extension.  Classification. Low dimension}

\subjclass[2000]{17B05, 17B30, 17B40}
\date{\today}
\begin{abstract}
  In this paper, we classify solvable quadratic Lie algebras up to dimension 6. In dimensions smaller than 6, we use the Witt decomposition given in \cite{Bou59} and a result in \cite{PU07} to obtain two non-Abelian indecomposable solvable quadratic Lie algebras. In the case of dimension 6, by applying the method of double extension given in \cite{Kac85} and \cite{MR85} and the classification result of singular quadratic Lie algebras in \cite{DPU}, we have three families of solvable quadratic Lie algebras which are indecomposable and not isomorphic.
\end{abstract}
\maketitle

\section{Introduction}

Throughout the paper, the base field is the complex field $\CC$. All considered vector spaces are finite-dimensional complex vector spaces.

As we known, the Killing form is a useful tool in studying semisimple Lie algebras. The Cartan criterion in the classification of Lie algebras or the proof of the Kostant-Morosov theorem, an important tool in the classification of adjoint orbits of classical Lie algebras $\ok(m)$ and $\spk(2n)$, base on the non-degeneracy of the Killing form. Another remarkable property of the Killing form is the invariance. Therefore, it is natural to give a question that are there Lie algebras not necessarily semisimple for which there exists a non-degenerate invariant bilinear form? We call them {\em quadratic} Lie algebras. The answer is confirmed. For instance, let $\g$ be a Lie algebra, $\g^*$ be its dual space and $\ad^*:\g\rightarrow \End(\g^*)$ denoted by the coadjoint representation of $\g$ in $\g^*$. Then the vector space $\bar{\g}=\g\oplus \g^*$ with the product defined by:
\[ [X+f, Y+g]_{\bar{\g}}=[X,Y]_{\g} + \ad^*(X)(g)-\ad^*(Y)(f)
\] 
becomes a quadratic Lie algebra with an invariant bilinear form given by: \[B(X+f, Y+g) = f(Y)+g(X)\] for all $X,\ Y \in \g,\ f,\ g\in\g^*$.

Quadratic Lie algebras are an interesting algebraic object which is in relation with many problems in Mathematics and Physics (see \cite{Bor97}, \cite{FS96} and their references). Also, this notion can be generalized for Lie superalgebras or can be similarly considered for other algebras (\cite{Bor97}, \cite{BB99}, \cite{AB10} or \cite{ZC07}). Many works have been devoted to develop tools for the study of quadratic Lie algebras (\cite{Kac85}, \cite{MR85}, \cite{Bor97} and \cite{PU07}) in which the notion of double extension is an effective method to construct the structure of a quadratic Lie algebra. It is initiated by V. Kac in the solvable case \cite{Kac85} and after that developed by A. Medina and P. Revoy in the general case \cite{MR85}.

In this paper, we approach to quadratic Lie algebras in a familiar way that is in low dimensions. We focus on the solvable case and on the classification up to dimension 6. The classification of solvable quadratic Lie algebras up to dimension 4 can be found in \cite{ZC07} but here we redo it by a shorter way which is based on the Witt decomposition in \cite{Bou59} combined with a result in \cite{PU07}. We use this method to classify solvable quadratic Lie algebras of dimension 5 which have been done in \cite{DPU} by the method of double extension. In the case of dimension 6, we apply the classification of $\OO(n)$-adjoint orbits of the Lie algebra $\ok(n)$ (see \cite{CM93}) and the classification of singular quadratic Lie algebras give in \cite{DPU} to obtain three families of solvable quadratic Lie algebras which are indecomposable and not isomorphic.

The paper is organized in 3 sections. The first one is devoted to introduce basic definitions and preliminaries. The classification of solvable quadratic Lie algebras up to dimension 5 is shown in Section 2. In Section 3, we consider a particular case of the notion of double extension and apply it to obtain all indecomposable solvable quadratic Lie algebras of dimension 6. 

\section{Preliminaries}

\begin{defn}Let $\g$ be a Lie algebra. A bilinear form $B:\ \g\times\g\rightarrow \CC$ is called:

\begin{enumerate}
	\item[(i)] {\em symmetric} if  $B(X,Y) = B(Y,X)$ for all $X, \ Y \in\g$,
	\item[(ii)] {\em non-degenerate} if $B(X,Y) = 0$ for all $Y\in\g$ implies $X=0$,
	\item[(iii)] {\em invariant} if $B([X,Y],Z) = B(X,[Y,Z])$ for all $X,\ Y,\ Z \in\g$.
\end{enumerate}

A Lie algebra $\g$ is called {\em quadratic} if there exists a bilinear form $B$ on $\g$ such that $B$ is symmetric, non-degenerate and invariant.
\end{defn}

Let $(\g,B)$ be a quadratic Lie algebra and $V$ be a subspace of $\g$. Denote the {\em orthogonal component} of $V$ by $V^\bot=\{X\in\g\ |\ B(X,Y)=0,\ \forall \ Y\in V\}$ then we have:
\[ \dim(V)+\dim(V^\bot) = \dim(\g).
\]

An element $X$ in $\g$ is called {\em isotropic} if $B(X,X)=0$ and a subspace $W$ of $\g$ is called {\em totally isotropic} if $B(X,Y)=0$ holds all $X,\ Y\in W$. In this case, it is obvious that $W\subset W^\bot$.

The study of quadratic Lie algebras can be reduced to the study of {\em indecomposable} ones by the following decomposition \cite{Bor97}.

\begin{prop}\label{prop1.2}

Let $(\g,B)$ be a quadratic Lie algebra and $I$ be an ideal of $\g$. Then $I^\bot$ is also an ideal of $\g$. Moreover, if the restriction of $B$ on $I\times I$ is non-degenerate then the restriction of $B$ on $I^\bot\times I^\bot$ is also non-degenerate, $[I,I^\bot]=\{0\}$ and $I\cap I^\bot=\{0\}$.

\end{prop}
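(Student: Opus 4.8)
The plan is to prove the statement in three logically independent parts: that $I^\bot$ is an ideal, the non-degeneracy transfer, and finally the bracket and intersection claims. The unifying tool throughout is the invariance of $B$, which lets us convert any statement about brackets into a statement about orthogonality.

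First I would show that $I^\bot$ is an ideal. Let $X \in I^\bot$ and $Y \in \g$ be arbitrary; I must check that $[Y,X] \in I^\bot$, i.e. that $B([Y,X],Z) = 0$ for every $Z \in I$. By invariance, $B([Y,X],Z) = B(Y,[X,Z])$. Since $I$ is an ideal, $[X,Z] = -[Z,X] \in I$, and since $X \in I^\bot$ we have $B(X, I) = 0$; to finish I rewrite $B(Y,[X,Z])$ using invariance a second time as $B([Y,X],Z)$ — so instead I argue directly: $B([Y,X],Z) = B(Y,[X,Z])$ and $[X,Z]\in I$, but this alone does not vanish. The correct route is $B([Y,X],Z) = B(Y,[X,Z]) = -B(Y,[Z,X])$; since $Z \in I$ and $I$ is an ideal, $[Z,X] \in I$, and I instead apply invariance as $B([Z,X],Y)$ — the clean statement is $B([Y,X],Z)=B(X,[Z,Y])$ after reassociating, and $[Z,Y]\in I$ because $Z\in I$, whence $B(X,[Z,Y])=0$ since $X\in I^\bot$. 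This is the one place demanding care: I must apply invariance so that the $I$-element lands inside the second slot paired against $X\in I^\bot$.

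Next I would establish the non-degeneracy transfer, assuming $B|_{I\times I}$ is non-degenerate. The key observation is that non-degeneracy of $B$ on $I$ forces $I \cap I^\bot = \{0\}$: any $X$ in the intersection is orthogonal to all of $I$, hence is zero by the assumed non-degeneracy on $I$. Combined with the dimension identity $\dim(I) + \dim(I^\bot) = \dim(\g)$ stated in the excerpt, this gives the direct sum decomposition $\g = I \oplus I^\bot$ as vector spaces. To show $B|_{I^\bot \times I^\bot}$ is non-degenerate, take $X \in I^\bot$ with $B(X, I^\bot) = 0$; since $X \in I^\bot$ also satisfies $B(X,I)=0$, and $\g = I \oplus I^\bot$, I conclude $B(X,\g) = 0$, so $X = 0$ by non-degeneracy of $B$ on all of $\g$.

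Finally I would prove $[I, I^\bot] = \{0\}$. The cleanest argument uses that $[I,I^\bot]$ is contained in both $I$ and $I^\bot$: it lies in $I$ because $I$ is an ideal, and in $I^\bot$ because $I^\bot$ is an ideal (just established). Hence $[I,I^\bot] \subseteq I \cap I^\bot = \{0\}$. This also re-derives $I \cap I^\bot = \{0\}$, which I would state once and reuse. I expect the main obstacle to be purely bookkeeping in the first part: choosing the correct pair of invariance applications so that the $I$-valued bracket is paired against the $I^\bot$-element rather than against an arbitrary element of $\g$. Everything after that follows formally from the dimension identity and bilinearity, with no genuine difficulty.
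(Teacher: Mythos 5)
The paper does not actually prove this proposition; it is quoted from \cite{Bor97} without argument, so there is nothing to compare against except the standard proof---which is exactly what you give, and it is correct. All three parts work: the ideal property of $I^\bot$ follows from one application of invariance, the non-degeneracy transfer from $I\cap I^\bot=\{0\}$ together with the dimension count $\dim(I)+\dim(I^\bot)=\dim(\g)$, and $[I,I^\bot]\subseteq I\cap I^\bot=\{0\}$ is immediate once both are ideals. The only defect is expository: your first paragraph records two abandoned applications of invariance before landing on the right one; in a final version you should present only the clean chain $B([Y,X],Z)=-B([X,Y],Z)=-B(X,[Y,Z])=0$, which works because $[Y,Z]\in I$ and $X\in I^\bot$, and omit the false starts.
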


If the restriction of $B$ on $I\times I$ is non-degenerate then $I$ is called a {\em non-degenerate} ideal of $\g$. In this case, $\g=I\oplus I^\bot$. Since the direct product is the orthogonal direct product so for convenience, we use the notion $\g=I\oplusp I^\bot$.

\begin{defn}
We say a quadratic Lie algebra $\g$ {\em indecomposable} if $\g = \g_1
  \oplusp \g_2$, with $\g_1$ and $\g_2$ ideals of $\g$, implies $\g_1$
  or $\g_2 = \{0\}$.
\end{defn}

\begin{defn} Let $\g$ and $\g'$ be two Lie algebras endowed with non-degenerate invariant bilinear forms $B$ and $B'$ respectively. If $A$ is a Lie algebra isomorphism from $\g$ onto $\g'$ satisfying $B'(A(X),A(Y)) = B(X,Y)$ for all $X,Y\in\g$ then we say that $\g$ and $\g'$ are \emph{i-isomorphic} and $A$ is an \emph{i-isomorphism} from $\g$ onto $\g'$.
\end{defn}

Remark that the isomorphic and i-isomorphic notions may be not equivalent. An example can be found in \cite{DPU}.

Next, we introduce another decomposition that is called the {\em reduced} decomposition. This notion allows us to only focus on quadratic Lie algebras having a totally isotropic center.

\begin{prop} \label{prop2.8} \cite{PU07} \hfill

  Let $(\g,B)$ be a non-Abelian quadratic Lie algebra. Then there
  exist a central ideal $\zk$ and an ideal $\lk \neq \{0\}$ such
  that:

\begin{enumerate}

\item[(i)] $\g = \zk \oplusp \lk$, where $\left( \zk, B|_{\zk \times \zk} \right)$ and $\left(\lk,
    B|_{\lk \times \lk} \right)$ are quadratic Lie algebras. Moreover,
  $\lk$ is non-Abelian.

\item[(ii)] The center $\Zs(\lk)$ is totally isotropic, i.e. $\Zs(\lk)
  \subset [\lk, \lk]$ and
	\[\dim(\Zs(\lk))\leq \frac{1}{2} \dim(\lk)\leq\dim([\lk, \lk]).
	\]

\item[(iii)] Let $\g'$ be a quadratic Lie algebra and $A : \g \to \g'$ be a
  Lie algebra isomorphism. Then \[ \g' = \zk' \oplusp \lk'\] where
  $\zk' = A(\zk)$ is central, $\lk' = A(\zk)^\perp$, $\Zs(\lk')$ is
  totally isotropic and $\lk$ and $\lk'$ are isomorphic. Moreover if
  $A$ is an i-isomorphism then $\lk$ and $\lk'$ are i-isomorphic.

\end{enumerate}

\end{prop}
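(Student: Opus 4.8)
The plan is to build everything on one structural identity: in a quadratic Lie algebra $(\g,B)$, invariance and non-degeneracy together force $\Zs(\g)=[\g,\g]^\bot$. Indeed, $X\in[\g,\g]^\bot$ means $B([Y,Z],X)=0$ for all $Y,Z$; by invariance this reads $B(Y,[Z,X])=0$ for all $Y,Z$, and non-degeneracy then gives $[Z,X]=0$ for all $Z$, i.e. $X\in\Zs(\g)$. The converse is the same computation run backwards, and taking orthogonals in finite dimension also yields $\Zs(\g)^\bot=[\g,\g]$. From this I immediately read off that the radical of the restricted form $B|_{\Zs(\g)}$ equals $\Zs(\g)\cap\Zs(\g)^\bot=\Zs(\g)\cap[\g,\g]$, and that this intersection is totally isotropic, since its elements lie simultaneously in $[\g,\g]$ and in $[\g,\g]^\bot$.

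To obtain (i) I would let $\zk$ be any vector-space complement of $\Zs(\g)\cap[\g,\g]$ inside $\Zs(\g)$. A complement of the radical of a symmetric form always carries a non-degenerate restriction, so $B|_\zk$ is non-degenerate; being central, $\zk$ is an ideal. Proposition~\ref{prop1.2} then gives $\g=\zk\oplusp\lk$ with $\lk=\zk^\bot$ a non-degenerate ideal, so $(\zk,B|_{\zk\times\zk})$ and $(\lk,B|_{\lk\times\lk})$ are quadratic. Since $\zk\subseteq\Zs(\g)=[\g,\g]^\bot$, we get $[\g,\g]\subseteq\lk$, hence $[\lk,\lk]=[\g,\g]$; this shows both $\lk\neq\{0\}$ and that $\lk$ is non-Abelian. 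For (ii), centrality of $\zk$ gives $\Zs(\lk)=\Zs(\g)\cap\lk$, and a short computation using $\g=\zk\oplus\lk$ with $[\g,\g]\subseteq\lk$ identifies this with $\Zs(\g)\cap[\g,\g]$, already known to be totally isotropic and contained in $[\g,\g]=[\lk,\lk]$. The dimension inequalities are then formal: total isotropy bounds $\dim\Zs(\lk)$ by $\tfrac12\dim\lk$, while the orthogonality relation $\Zs(\lk)=[\lk,\lk]^\bot$ inside $\lk$ gives $\dim\Zs(\lk)+\dim[\lk,\lk]=\dim\lk$, forcing $\dim[\lk,\lk]\ge\tfrac12\dim\lk$.

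For part (iii) I would use that any Lie algebra isomorphism $A$ preserves the canonical ideals, $A(\Zs(\g))=\Zs(\g')$ and $A([\g,\g])=[\g',\g']$, and hence $A(\Zs(\g)\cap[\g,\g])=\Zs(\g')\cap[\g',\g']$. Thus $\zk'=A(\zk)$ is central and is a complement of the radical of $B'|_{\Zs(\g')}$ inside $\Zs(\g')$; by the complement-of-radical argument $B'|_{\zk'}$ is non-degenerate, so Proposition~\ref{prop1.2} yields $\g'=\zk'\oplusp\lk'$ with $\lk'=(\zk')^\bot$, and $\Zs(\lk')$ is totally isotropic by (ii) applied to $\g'$. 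Because $\zk$ and $\zk'$ are central ideals, $\lk\cong\g/\zk$ and $\lk'\cong\g'/\zk'$ as Lie algebras, and $A$ descends to an isomorphism $\g/\zk\to\g'/\zk'$; composing gives $\lk\cong\lk'$. If moreover $A$ is an i-isomorphism, then it sends orthogonals to orthogonals, so $A(\lk)=A(\zk^\bot)=(\zk')^\bot=\lk'$, and the restriction $A|_\lk$ is the required i-isomorphism.

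The step I expect to be the genuine obstacle is the non-degeneracy claim inside (iii): since $A$ is a priori only a Lie algebra isomorphism and need not respect $B$ and $B'$, one cannot simply transport non-degeneracy of $B|_\zk$ to $B'|_{\zk'}$. The resolution is the intrinsic characterization noted above, namely that $\zk$ and $\zk'$ are complements of $\Zs\cap[\cdot,\cdot]$, an object preserved by every Lie algebra isomorphism; granted this, the remainder is bookkeeping around Proposition~\ref{prop1.2} and the identity $\Zs(\g)=[\g,\g]^\bot$.
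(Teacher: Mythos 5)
Your proof is correct, and it is essentially the standard argument: the paper itself gives no proof of this proposition (it is quoted from \cite{PU07}), and your route --- the identity $\Zs(\g)=[\g,\g]^\bot$, taking $\zk$ to be a complement of the radical $\Zs(\g)\cap[\g,\g]$ inside $\Zs(\g)$, and then invoking Proposition~\ref{prop1.2} --- is the same one used in that reference. You also correctly isolate and resolve the only delicate point in (iii), namely that non-degeneracy of $B'|_{\zk'}$ cannot be transported via $A$ but follows from the intrinsic characterization of $\zk'$ as a complement of $\Zs(\g')\cap[\g',\g']$, which every Lie algebra isomorphism preserves.
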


\begin{defn} A quadratic Lie algebra $\g\neq\{0\}$ is called {\em reduced} if $\Zs(\g)$ is totally isotropic.
\end{defn}

Note that if a quadratic Lie algebra $\g$ of dimension more than 1 is not reduced then there is a central element $X$ such that $B(X,X)\neq 0$. That means the ideal spanned by $X$ is non-degenerate. By Proposition \ref{prop1.2}, $\g$ is decomposable.
\section{Solvable quadratic Lie algebras up to dimension 5} \label{Section2}

In this section, we classify all indecomposable solvable quadratic Lie algebras up to dimension 5. The classification is based on the {\em Witt decomposition} given in \cite{Bou59} as follows:
\begin{prop}
Let $V$ be a finite-dimensional complex vector space endowed with a non-degenerate bilinear form $B$. Assume $U$ a totally isotropic subspace of $V$. Then there exist a totally isotropic subspace $W$ and a non-degenerate subspace $F$ of $V$ such that $\dim(U)=\dim(W)$, $F=(U\oplus W)^\bot$ and 
\[V=F\oplusp (U\oplus W)
\]
\end{prop}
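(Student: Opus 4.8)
The plan is to argue by induction on $k=\dim(U)$, the idea being to peel off one hyperbolic plane at a time and thereby build a totally isotropic subspace $W$ that is ``dual'' to $U$. Throughout I use that $B$ is symmetric, as is the case for the forms attached to quadratic Lie algebras. When $k=0$ the statement is trivial: take $W=\{0\}$ and $F=V$. So I assume $k\ge 1$ and that the result holds for every totally isotropic subspace of dimension $k-1$ in any non-degenerate space.

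For the inductive step, fix a nonzero $u_1\in U$. Since $B$ is non-degenerate there is a vector $v_1$ with $B(u_1,v_1)=1$, and replacing $v_1$ by $w_1 := v_1 - \tfrac12 B(v_1,v_1)\,u_1$ I would arrange, using $B(u_1,u_1)=0$ together with symmetry, that $B(w_1,w_1)=0$ while keeping $B(u_1,w_1)=1$. Then $H_1 := \spa\{u_1,w_1\}$ is a non-degenerate plane, so Proposition \ref{prop1.2} gives $V = H_1 \oplusp H_1^\perp$. I next set $U_2 := U\cap H_1^\perp$. Because $U$ is totally isotropic, the condition $u\in H_1^\perp$ reduces to $B(u,w_1)=0$, so $U_2$ is the kernel of the functional $u\mapsto B(u,w_1)$ on $U$; as this functional sends $u_1$ to $1$ it is surjective, whence $U = \CC u_1 \oplus U_2$ and $\dim(U_2)=k-1$. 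Moreover $U_2$ is totally isotropic and, by construction, contained in the non-degenerate space $H_1^\perp$.

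Applying the induction hypothesis to $U_2$ inside $\bigl(H_1^\perp, B|_{H_1^\perp}\bigr)$ yields a totally isotropic subspace $W_2\subset H_1^\perp$ with $\dim(W_2)=k-1$ and $U_2\oplus W_2$ non-degenerate. I would then put $W := \CC w_1 \oplus W_2$. One checks that $W$ is totally isotropic, since $w_1$ is isotropic, $W_2$ is totally isotropic, and $W_2\subset H_1^\perp$ is orthogonal to $w_1$; that $\dim(W)=k=\dim(U)$; and that $U\oplus W = H_1 \oplusp (U_2\oplus W_2)$ is an orthogonal sum of two non-degenerate subspaces, hence itself non-degenerate. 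Taking $F := (U\oplus W)^\perp$ and invoking Proposition \ref{prop1.2} once more gives $V = F \oplusp (U\oplus W)$ with $F$ non-degenerate, completing the induction. The only points requiring care are the scalar correction that makes $w_1$ isotropic and the verification that the three orthogonality relations survive when the pieces $H_1$ and $U_2\oplus W_2$ are assembled; both are routine over $\CC$, so I do not expect a serious obstacle beyond this bookkeeping.
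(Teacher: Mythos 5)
Your argument is correct. Note, though, that the paper does not prove this proposition at all: it is quoted from Bourbaki \cite{Bou59} and used as a black box, so there is no in-paper proof to compare against. What you have written is the standard Witt-style induction --- split off one hyperbolic plane $H_1=\spa\{u_1,w_1\}$ at a time, pass to $H_1^\perp$, and reassemble --- which is essentially the classical argument behind the cited result. All the key steps check out: the correction $w_1=v_1-\tfrac12 B(v_1,v_1)u_1$ does make $w_1$ isotropic while preserving $B(u_1,w_1)=1$ (this uses symmetry of $B$, which you rightly flag since the proposition as stated only says ``non-degenerate''); $U_2=U\cap H_1^\perp$ has codimension one in $U$ because the functional $u\mapsto B(u,w_1)$ is onto; and $U\oplus W=H_1\oplusp(U_2\oplus W_2)$ is non-degenerate as an orthogonal sum of non-degenerate pieces, so that $V=(U\oplus W)\oplusp F$ with $F=(U\oplus W)^\perp$ non-degenerate. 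Two small points of hygiene: Proposition \ref{prop1.2} is stated for ideals of a quadratic Lie algebra, so strictly you should invoke the underlying linear-algebra fact (a non-degenerate subspace $H$ of a non-degenerate space satisfies $V=H\oplus H^\perp$ with $H^\perp$ non-degenerate, via $\dim H+\dim H^\perp=\dim V$ and $H\cap H^\perp=\{0\}$) rather than that proposition; and your induction hypothesis should be phrased, as the proposition is, in terms of $F_2=(U_2\oplus W_2)^\perp$ being non-degenerate and complementary --- the non-degeneracy of $U_2\oplus W_2$ you use is an immediate consequence. Neither affects the validity of the proof.
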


As a consequence, if $\{X_1,...,X_n\}$ is a basis of $U$ then there exists a basis $\{Y_1,...,Y_n\}$ of $W$ such that $B(X_i,Y_j)=\delta_{ij}$ for all $1\leq i,j\leq n$.

We recall the classification of solvable quadratic Lie algebras up to dimension 4 in \cite{ZC07} but here there is a shorter proof based on a remarkable result in \cite{PU07} that if $\g$ is a non-Abelian quadratic Lie algebra then $\dim([\g,\g])\geq 3$.
\begin{prop}\label{prop3.2}
Let $\g$ be a solvable quadratic Lie algebra with $\dim(\g)\leq 4$. Then we have the following cases:
\begin{enumerate}
	\item[(i)] If $\dim(\g)\leq 3$ then $\g$ is Abelian.
	\item[(ii)] If $\dim(\g)=4$ and $\g$ is non-Abelian then $\g$ is i-isomorphic to the diamond Lie algebra $\g_4 = \spa\{X,P,Q,Z\}$, where the subspaces spanned by $\{X,P\}$ and $\{Q,Z\}$ are totally isotropic, $B(X,Z) = B(P,Q) = 1$, $B(X,Q)= B(P,Z) = 0$ and the Lie bracket is defined by $[X,P] = P$, $[X,Q] = -Q$, $[P,Q] = Z$, otherwise trivial.
\end{enumerate}
\end{prop}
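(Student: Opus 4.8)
\emph{Part (i).} The plan is immediate from the quoted bound. If $\dim(\g)\le 3$ and $\g$ were non-Abelian, then $\dim([\g,\g])\ge 3$ would force $[\g,\g]=\g$, which is impossible for a nonzero solvable Lie algebra; hence $\g$ is Abelian.

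\emph{Part (ii).} For the four-dimensional case I would first extract the numerical data. Solvability gives $[\g,\g]\subsetneq\g$ while the result of \cite{PU07} gives $\dim([\g,\g])\ge 3$, so $\dim([\g,\g])=3$. Since $B$ is invariant and non-degenerate, $X$ is central exactly when $B(X,[\g,\g])=0$, i.e. $\Zs(\g)=[\g,\g]^\bot$; thus $\dim(\Zs(\g))=1$. I would then observe that $\g$ must be reduced: if not, a central element with $B(Z_0,Z_0)\ne 0$ spans a non-degenerate ideal, and Proposition \ref{prop1.2} splits off a one-dimensional summand leaving a three-dimensional quadratic Lie algebra, which is Abelian by part (i) --- making $\g$ Abelian, a contradiction. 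Consequently $\Zs(\g)$ is totally isotropic, spanned by an isotropic $Z\in[\g,\g]$.

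Next I would build an adapted basis via the Witt decomposition recalled above, applied to $U=\CC Z$: this yields an isotropic $X$ with $B(X,Z)=1$ and a hyperbolic plane $F=(\CC Z\oplus\CC X)^\bot$, in which I choose an isotropic basis $\{P,Q\}$ with $B(P,Q)=1$. The remaining task is to read off the Lie bracket, and this is the step that carries the real content. Because $Z$ is central and $[\g,\g]=Z^\bot=\spa\{P,Q,Z\}$, the map $\ad X$ sends $\g$ into $\spa\{P,Q,Z\}$ and annihilates $Z$; invariance makes $\ad X$ skew for $B$, and evaluating skewness on $\{P,Q\}$ pins the bracket down to $[X,P]=\beta P+\delta Z$ and $[X,Q]=-\beta Q+\delta' Z$. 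The same invariance identity gives $B([P,Q],P)=B([P,Q],Q)=0$, so $[P,Q]\in F^\bot\cap[\g,\g]=\CC Z$, say $[P,Q]=cZ$, all other brackets being zero.

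Finally I would let invariance eliminate the free constants. Testing $B([X,P],X)=B(X,[P,X])$ immediately forces $\delta=0$, and symmetrically $\delta'=0$; testing $B([X,P],Q)=B(X,[P,Q])$ forces $\beta=c$, which is nonzero since otherwise every bracket vanishes (consistent with $\dim([\g,\g])=3$). Rescaling by $X\mapsto \beta^{-1}X$, $Z\mapsto\beta Z$ preserves all the form relations and produces $[X,P]=P$, $[X,Q]=-Q$, $[P,Q]=Z$, so $\{X,P,Q,Z\}$ realizes exactly the defining data of $\g_4$ and the identification is an i-isomorphism. I expect the only delicate part to be the determination of $\ad X$ and of $[P,Q]$ from skewness and invariance; once the bracket has that shape, the vanishing of $\delta,\delta'$ and the normalization are forced and essentially automatic.
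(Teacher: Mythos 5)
Your proposal is correct and follows essentially the same route as the paper: both parts rest on the bound $\dim([\g,\g])\ge 3$ from \cite{PU07}, the reduction to a totally isotropic one-dimensional center, the Witt decomposition producing the hyperbolic basis $\{X,P,Q,Z\}$, and the invariance of $B$ to eliminate the bracket coefficients before rescaling. Your bookkeeping via the skew-symmetry of $\ad X$ is just a slightly more structured version of the paper's coefficient-by-coefficient computation.
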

\begin{proof}
Since $\g$ is solvable then $[\g,\g]\neq \g$. Therefore if $\dim(\g)\leq 3$ then $\g$ is Abelian. If $\dim(\g)=4$ and $\g$ is non-Abelian, we show that $\g$ is reduced. Indeed, if $\g$ is not reduced then there exists a central element $X$ such that $B(X,X)\neq 0$. That implies $I=\CC X$ a non-degenerate ideal of $\g$ and so we have
\[\g = I\oplusp I^\bot.
\]
Note that $I^\bot$ is a solvable quadratic Lie algebra of dimension 3 then $I^\bot$ is Abelian. Therefore, $\g$ is Abelian. This contradiction shows $\g$ reduced.

Since $[\g,\g]\neq \g$ and $\dim([\g,\g])\geq 3$ then $\dim([\g,\g])= 3$ and so $\dim(\Zs(\g))= 1$. Assume $\Zs(\g)=\CC Z$. Since $\Zs(\g)$ is totally isotropic then by the Witt decomposition there exists a one-dimensional totally isotropic subspace $W$ and a two-dimensional non-degenerate subspace $F$ of $\g$ such that:
\[\g=F\oplusp(\Zs(\g)\oplus W).
\]
Moreover, we can choose an element $X$ in $W$ and a basis $\{P,Q\}$ of $F$ such that $B(X,Z)=B(P,Q)=1$ and $B(P,P)=B(Q,Q)=0$.

By $[\g,\g]=\Zs(\g)^\bot$ then $[\g,\g]=\spa\{Z,P,Q\}$ and therefore we can assume
\[[X,P]=a_1Z+b_1P+c_1Q,\ [X,Q]=a_2Z+b_2P+c_2Q \ \text{and}\ [P,Q]=a_3Z+b_3P+c_3Q
\]
where $a_i,\ b_i, c_i\in\CC,\ 1\leq i\leq 3$.

Since $B(X,[X,P])=B([X,X],P)=0$ and $B(P,[X,P])=-B([P,P],X)=0$ one has $a_1=c_1=0$. Similarly, $a_2=b_2=0$ and $b_3=c_3=0$. Moreover, $B(X,[P,Q])=B([X,P],Q)=-B([X,Q],P)$ implies $b_1 = -c_2=a_3$.

By replacing $Z:=a_3Z$ and $X:=\frac{X}{a_3}$ we obtain the Lie bracket $[X,P]=P$, $[X,Q]=-Q$ and $[P,Q]=Z$. 
\end{proof}

Now, we continue with $\g$ an indecomposable solvable quadratic Lie algebra of dimension 5. It is obvious that $\g$ is reduced. By Proposition \ref{prop2.8}, there are only two cases: $\dim(\Zs(\g))= 1$ and $\dim(\Zs(\g))= 2$.
\begin{enumerate}
	\item[(i)] If $\dim(\Zs(\g))= 1$, assume $\Zs(\g)=\CC Z$. Then there exist an isotropic element $Y$ and a subspace $F$ of $\g$ such that $B(Z,Y)=1$ and $\g=F\oplusp(\CC Z\oplus \CC Y)$, where $F=(\CC Z\oplus \CC Y)^\bot$. We can choose a basis $\{P,Q,X\}$ of $F$ satisfying $B(P,X)=B(Q,Q)=1$, the other are zero.
	
	Since $[\g,\g]=\Zs(\g)^\bot$ then $[\g,\g]=\spa\{Z,P,Q,X\}$. Moreover, since $B([Y,X],Y)=B([Y,X],X) = 0$ then we can assume $[Y,X]=a_1X + b_1Q$ with $a_1,\ b_1\in\CC$. Similarly, we have the Lie bracket defined by:
 \[ [Y,Q]=a_2X+b_2P,\ \ [Y,P]=a_3Q+b_3P,\ \ [X,Q]=a_4X+b_4Z\]
\[ [X,P]=a_5Q+b_5Z \text{ and }\ [Q,P]=a_6P+b_6Z\]
where $a_i,\ b_i\in\CC$, $2\leq i\leq 6$.
	
	By the invariance of $B$, we obtain $a_1=b_5=-b_3$, $b_1 = b_4=-b_2$, $a_2 = b_6=-a_3$ and $a_4=a_6=-a_5$. Therefore, we rewrite Lie brackets as follows:
	\[ [Y,X]=xX+yQ,\ \ [Y,Q]=zX-yP,\ \ [Y,P]=-zQ-xP,\]
	\[ [X,Q]=wX+yZ,\ \ [X,P]=-wQ+xZ\ \ \text{ and }\ [Q,P]=wP+zZ\]
	where $x,\ y,\ z, \ w \in\CC$.
	
	If $w\neq 0$, set $A:=[X,Q]$, $B:=[X,P]$ and $C:=[Q,P]$ then one has $[A,B]=-w^2 A$, $[B,C]=-w^2 C$ and $[C,A]=-w^2 B$. It implies that the vector space $U=\spa\{A,B,C\}$ becomes a subalgebra of $\g$ and this subalgebra is not solvable. This is a contradiction by $\g$ solvable. So $w = 0$. In this case, it is easy to check that $zX-xQ+yP\in \Zs(\g)$. On the other hand, the numbers $x,\ y,\ z$ are not zero at the same time. So $\dim(\Zs(\g))> 1$. This is a contradiction then it does not happen this case.
		
	\item[(ii)] If $\dim(\Zs(\g))= 2$, assume  $\Zs(\g)=\spa\{Z_1,Z_2\}$. By the Witt decomposition, there exist elements $X_1,X_2$ and $T$ satisfying: $\g$ is spanned by $\{Z_1,Z_2,T,\linebreak X_1,X_2\}$, the subspace $W=\spa\{X_1,X_2\}$ is totally isotropic, the bilinear form $B$ is defined by $B(T,T)=1$, $B(X_i,Z_j)=\delta_{ij}$, $1\leq i,j\leq 2$, the other are zero.
	
	Since $[\g,\g]=\Zs(\g)^\bot$ then $[\g,\g]=\spa\{Z_1,Z_2,T\}$. Moreover, $B$ is invariant then we can assume $[X_1,X_2]=x T$, $[X_1,T]=y Z_2$ and $[X_2,T]=z Z_1$, where $x,\ y,\ z\in\CC$. By the invariance of $B$ again, we obtain $x=-y=z$. Replace $X_1:=\frac{X_1}{x}$ and $Z_1:=x Z_1$ to obtain $[X_1,X_2]=T$, $[X_1,T]=- Z_2$ and $[X_2,T]=Z_1$.		
\end{enumerate}

Finally, we have a classification of solvable quadratic Lie algebras of dimension 5 as follows.
\begin{prop}
Let $(\g,B)$ be an indecomposable solvable quadratic Lie algebra of dimension 5. Then there exists a basis $\{Z_1,Z_2,T,X_1,X_2\}$ of $\g$ such that $B(T,T)=1$, $B(X_i,Z_j)=\delta_{ij}$, $1\leq i,j\leq 2$, the other are zero and the Lie bracket is defined by $[X_1,X_2]=T$, $[X_1,T]=- Z_2$ and $[X_2,T]=Z_1$, the other are trivial.
\end{prop}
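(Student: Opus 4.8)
The plan is to establish the final proposition by carrying out the reduction already begun in the text and showing that the analysis leaves exactly one normal form. The statement to prove is that every indecomposable solvable quadratic Lie algebra $\g$ of dimension $5$ is, up to i-isomorphism, the algebra with basis $\{Z_1,Z_2,T,X_1,X_2\}$, bilinear form $B(T,T)=1$, $B(X_i,Z_j)=\delta_{ij}$ and Lie bracket $[X_1,X_2]=T$, $[X_1,T]=-Z_2$, $[X_2,T]=Z_1$. The first step is to observe that an indecomposable $\g$ of dimension more than $1$ is automatically reduced, since by the remark following the definition of reduced any non-reduced quadratic Lie algebra has a central element $X$ with $B(X,X)\neq 0$, whence $\CC X$ is a non-degenerate ideal and $\g$ decomposes by Proposition \ref{prop1.2}. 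Thus $\Zs(\g)$ is totally isotropic.

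Next I would use Proposition \ref{prop2.8}, which tells us that $\Zs(\g)\subset[\g,\g]$ and $\dim(\Zs(\g))\le\frac12\dim(\g)$, so here $\dim(\Zs(\g))\in\{1,2\}$ (the center is nonzero because $\g$ is solvable and hence nilpotent-related arguments force a nontrivial center, and $\dim(\Zs(\g))\le 2$). The core of the proof is to dispose of the case $\dim(\Zs(\g))=1$ and to pin down the case $\dim(\Zs(\g))=2$. In both cases the strategy is the same: apply the Witt decomposition to the totally isotropic subspace $\Zs(\g)$ to produce an adapted basis in which $B$ has a standard shape, then use $[\g,\g]=\Zs(\g)^\bot$ together with the invariance identity $B([U,V],W)=B(U,[V,W])$ to constrain all structure constants, and finally normalize by rescaling basis vectors.

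The main obstacle, and the step I would treat most carefully, is the elimination of the case $\dim(\Zs(\g))=1$. After writing the six brackets with structure constants $x,y,z,w$ (obtained from invariance), the key move is to distinguish $w\neq 0$ from $w=0$. When $w\neq 0$, setting $A=[X,Q]$, $B=[X,P]$, $C=[Q,P]$ produces a three-dimensional subalgebra with brackets $[A,B]=-w^2A$, $[B,C]=-w^2C$, $[C,A]=-w^2B$, which is isomorphic to a simple Lie algebra and hence not solvable, contradicting solvability of $\g$. When $w=0$, a direct check shows the nonzero vector $zX-xQ+yP$ is central, forcing $\dim(\Zs(\g))\ge 2$, again a contradiction. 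Verifying these two bracket computations and the resulting contradictions is where the real content lies, so I would make sure the invariance-derived relations among the constants are recorded correctly before invoking them.

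Having ruled out $\dim(\Zs(\g))=1$, the case $\dim(\Zs(\g))=2$ proceeds by the same adapted-basis technique: the Witt decomposition yields $\{Z_1,Z_2,T,X_1,X_2\}$ with $B(T,T)=1$, $B(X_i,Z_j)=\delta_{ij}$, invariance reduces the possible brackets to $[X_1,X_2]=xT$, $[X_1,T]=yZ_2$, $[X_2,T]=zZ_1$ with the further relation $x=-y=z$, and a final rescaling $X_1:=X_1/x$, $Z_1:=xZ_1$ gives the asserted normal form. Since this case is the only surviving possibility, it produces exactly the stated algebra, and because all normalizations were performed by i-isomorphisms (rescalings preserving the pairing), the classification is up to i-isomorphism. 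I expect no further obstruction here beyond bookkeeping of the invariance relations.
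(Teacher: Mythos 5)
Your proposal follows the paper's own argument essentially verbatim: reduction to the reduced case, Witt decomposition adapted to the totally isotropic center, elimination of $\dim(\Zs(\g))=1$ via the non-solvable three-dimensional subalgebra when $w\neq 0$ and the extra central vector $zX-xQ+yP$ when $w=0$, and normalization by rescaling in the case $\dim(\Zs(\g))=2$. The only small imprecision is your justification that the center is nonzero --- solvability alone does not force a nontrivial center (the two-dimensional non-Abelian Lie algebra is a counterexample); here it follows because $\Zs(\g)=[\g,\g]^{\bot}$ and $[\g,\g]\neq\g$ for solvable $\g$.
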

\section{Solvable quadratic Lie algebras of dimension 6}
\subsection{Double extension}
\begin{defn}
Let $(\g,B)$ be a quadratic Lie algebra. A derivation $D$ of $\g$ is called {\em skew-symmetric} if $D$ satisfies
\[ B(D(X),Y)=-B(X,D(Y)), \ \ \ \forall X,\ Y\in\g.
\]
\end{defn}

Denote by $\Der_a(\g)$ the vector space of skew-symmetric derivations of $\g$. By the invariance of $B$, all inner derivations of $\g$ are in $\Der_a(\g)$.
\begin{defn} (\cite{Kac85} and \cite{MR85})\hfill

Let $(\g,B)$ be a quadratic Lie algebra and $C\in \Der_a(\g)$. On the vector space
\[\bar{\g}=\g\oplus \CC e\oplus \CC f,
\]
we define the product:
\[[X,Y]_{\bar{\g}} = [X,Y]_{\g} +B(C(X),Y)f,\ \ [e,X]=C(X)\ \text{ and } [f,\bar{\g}]=0
\]
for all $X,\ Y\in\g$.

Then $\bar{\g}$ becomes a Lie algebra. Moreover, $\bar{\g}$ is a quadratic Lie algebra with an invariant bilinear form $\bar{B}$ defined by:
\[\bar{B}(e,e)=\bar{B}(f,f)=\bar{B}(e,\g)=\bar{B}(f,\g)=0, \ \bar{B}(X,Y)=B(X,Y) \text{ and } \bar{B}(e,f)=1
\]
for all $X,\ Y\in\g$. In this case, we call $\bar{\g}$ the {\em double extension of $\g$ by $C$}.
\end{defn}

The Lie algebra $\bar{\g}$ is also called the {\em double extension of $\g$ by the one-dimensional Lie algebra by means of $C$} (or a {\em one-dimensional double extension}, for short). The more general definition can be found in \cite{MR85}. However, the one-dimensional double extension is sufficient for studying solvable quadratic Lie algebras by the following proposition (see \cite{Kac85} or \cite{FS87}).
\begin{prop}\label{prop3.4}
Let $(\g,B)$ be a solvable quadratic Lie algebra of dimension $n$, $n\geq 2$.  Assume $\g$ non-Abelian. Then $\g$ is a one-dimensional double extension of a solvable quadratic Lie algebra of dimension $n-2$.
\end{prop}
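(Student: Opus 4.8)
The plan is to produce, inside $\g$, a nonzero isotropic central element $f$ generating a one-dimensional totally isotropic ideal, and then to recover a quadratic Lie algebra of dimension $n-2$ as the quotient $f^\perp/\CC f$, exhibiting $\g$ as its double extension.

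First I would record the general identity $[\g,\g]^\perp=\Zs(\g)$, valid for any quadratic Lie algebra: by the invariance and non-degeneracy of $B$, an element $X$ annihilates $[\g,\g]$ under $B$ iff $B([X,Y],Z)=0$ for all $Y,Z$, iff $[X,Y]=0$ for all $Y$. Since $\g$ is solvable, $[\g,\g]\neq\g$, hence $\dim\Zs(\g)=n-\dim[\g,\g]\geq 1$. Next I would locate the isotropic central element by examining $\Zs(\g)\cap[\g,\g]$. Any $X$ there lies both in $[\g,\g]$ and in $[\g,\g]^\perp=\Zs(\g)$, so $B(X,X)=0$; thus every element of this intersection is isotropic and central, and it suffices to show the intersection is nonzero. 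If it were zero, then $B$ would be non-degenerate on $[\g,\g]$, so by Proposition \ref{prop1.2} we would have $\g=[\g,\g]\oplusp\Zs(\g)$ with $\Zs(\g)$ central; but then $[\g,\g]=[[\g,\g],[\g,\g]]$, i.e. $[\g,\g]$ would be a perfect solvable ideal, forcing $[\g,\g]=0$ and contradicting that $\g$ is non-Abelian. Hence I may fix $0\neq f\in\Zs(\g)\cap[\g,\g]$ with $B(f,f)=0$.

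With $f$ in hand, set $I=\CC f$, which is a totally isotropic ideal since $f$ is central and isotropic. Its orthogonal $f^\perp=I^\perp$ is a codimension-one ideal containing $I$ (as $f$ is isotropic), and since $f$ is central, $I$ is an ideal of $f^\perp$. I would then define $\g_0:=f^\perp/\CC f$, of dimension $n-2$, with Lie bracket induced from $f^\perp$ and bilinear form $B_0(\bar X,\bar Y):=B(X,Y)$; this descends because $B(f,f^\perp)=0$, and it is non-degenerate because $(f^\perp)^\perp=\CC f$. Being a quotient of a subalgebra of $\g$, the algebra $\g_0$ is solvable, so $\g_0$ is a solvable quadratic Lie algebra of dimension $n-2$.

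Finally I would realize $\g$ as the double extension of $\g_0$. Pick $e_0$ with $B(e_0,f)=1$ and replace it by $e:=e_0-\tfrac12 B(e_0,e_0)f$, so that $B(e,e)=0$ and $B(e,f)=1$. Define $C:\g_0\to\g_0$ by $C(\bar X)=\overline{[e,X]}$; this is well defined since $B([e,X],f)=B(e,[X,f])=0$ places $[e,X]$ in $f^\perp$ and since $[e,f]=0$, it is a derivation by the Jacobi identity, and it lies in $\Der_a(\g_0)$ because invariance gives $B_0(C\bar X,\bar Y)=B([e,X],Y)=-B(X,[e,Y])=-B_0(\bar X,C\bar Y)$. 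Choosing a vector-space complement $\hk$ to $\CC f$ in $f^\perp$ and identifying $\g_0\cong\hk$, I would then check directly that the brackets of $\g$ reproduce the double-extension formulas $[X,Y]_\g=[X,Y]_{\g_0}+B(C(X),Y)f$ and $[e,X]_\g=C(X)$ up to the $\CC f$-component, and that $B$ matches $\overline{B_0}$; this yields an i-isomorphism $\g\cong\overline{\g_0}$. The main obstacle is exactly this last identification: the set-theoretic section $\g_0\to f^\perp$ does not preserve brackets on the nose, and one must verify that its failure is captured precisely by the term $B(C(X),Y)f$ and that $e$ can be normalized so that no spurious $f$-contribution survives in $[e,X]$ — both being routine once the skew-symmetry of $C$ and the centrality and isotropy of $f$ are in place.
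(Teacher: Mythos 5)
Your proof is correct and complete. Note that the paper does not actually prove Proposition \ref{prop3.4} at all --- it only cites \cite{Kac85} and \cite{FS87} --- so there is no in-paper argument to compare against; what you have written is precisely the classical argument from those references. The key step, showing $\Zs(\g)\cap[\g,\g]\neq\{0\}$ via the identity $[\g,\g]^\perp=\Zs(\g)$ and the observation that otherwise $[\g,\g]$ would be a nonzero perfect solvable ideal, is the standard route, and the reconstruction of $\g$ as a double extension of $f^\perp/\CC f$ is carried out properly, including the normalization $B(e,e)=0$ and the verification that $C$ descends to the quotient and is skew-symmetric. The one detail you flag as remaining --- that $[e,X]$ acquires no spurious $f$-component --- is in fact automatic once you take the complement $\hk=(\CC e\oplus\CC f)^\perp$: invariance gives $B([e,X],e)=B(e,[X,e])=-B(e,[e,X])$, hence $B([e,X],e)=0$, so no further normalization of $e$ is needed.
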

\begin{rem} 
A particular case of one-dimensional double extensions is $\g$ Abelian, then $C$ is a skew-symmetric map in the Lie algebra $\ok(\g)$ and the Lie bracket on $\bar{\g}$ is given by:
\[[e,X]=C(X)\ \ \text{and}\ \ [X,Y]=B(C(X),Y)f,\ \ \ \forall\ X,\ Y\in\g.
\]
Such Lie algebras have been classified up to isomorphism and up to i-isomorphism in \cite{DPU}.
\end{rem}
\begin{ex}
Let $\qk$ be a two-dimensional complex vector space endowed with a non-degenerate symmetric bilinear form $B$. In this case, $\qk$ is called a two-dimensional {\em quadratic} vector space. We can choose a basis $\{P,Q\}$ of $\qk$ such that $B(P,P)=B(Q,Q)=0$ and $B(P,Q)=1$ (we call $\{P,Q\}$ a {\em canonical} basis of $\qk$). Let $C:\qk\rightarrow\qk$ be a skew-symmetric map of $\qk$, that is an endomorphism satisfying $B(C(X),Y)=-B(X,C(Y)$ for all $X,\ Y\in\qk$. In this example, we choose $C$ having the matrix with respect to the basis $\{P,Q\}$ as follows:

\[C = \begin{pmatrix} 1 & 0 \\ 0 & -1\end{pmatrix}.
\] 
Set the vector space
\[\g=\qk\oplus \CC X\oplus \CC Z
\]
and define the product:
\[[X,P]=C(P)=P,\ \ [X,Q]=C(Q)=-Q \ \ \text{ and }\ \ [P,Q]=B(C(P),Q)Z = Z
\]
Then $\g$ is the diamond Lie algebra given in Proposition \ref{prop3.2} (ii).
\end{ex}
\subsection{Solvable quadratic Lie algebras of dimension 6}\hfill

By Proposition \ref{prop3.4}, the key of the classification of solvable quadratic Lie algebras is describing skew-symmetric derivations of a solvable quadratic Lie algebra. More particular, in the case of dimension 6, it is necessary to describe skew-symmetric derivations of solvable quadratic Lie algebras of dimension 4. By the classification result in Proposition \ref{prop3.2}, we focus on the Abelian Lie algebra of dimension 4 and the diamond Lie algebra. We need the following lemma.

\begin{lem}\label{lem3.8}
Any skew-symmetric derivation of the diamond Lie algebra $\g_4$ is inner.
\end{lem}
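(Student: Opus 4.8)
The plan is to compute $\Der_a(\g_4)$ directly and match it against the inner derivations, using the explicit structure of the diamond Lie algebra. Recall from Proposition~\ref{prop3.2}(ii) that $\g_4 = \spa\{X,P,Q,Z\}$ with nonzero brackets $[X,P]=P$, $[X,Q]=-Q$, $[P,Q]=Z$, and with the bilinear form given by $B(X,Z)=B(P,Q)=1$ and all other pairings among basis vectors zero. A first useful observation is that $[\g_4,\g_4]=\spa\{P,Q,Z\}$ and $\Zs(\g_4)=\CC Z$, so any derivation $D$ must satisfy $D(\Zs(\g_4))\subset\Zs(\g_4)$ and $D([\g_4,\g_4])\subset[\g_4,\g_4]$; this already constrains the matrix of $D$ considerably.

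First I would impose the skew-symmetry condition $B(D(U),V)=-B(U,D(V))$ on all basis pairs to cut down the sixteen matrix entries of $D$. Because $B$ pairs $X$ with $Z$ and $P$ with $Q$, skew-symmetry forces the matrix of $D$ to lie in $\ok(\g_4)\cong\sok(4)$, a six-dimensional space; writing $D$ in the ordered basis $\{X,P,Q,Z\}$ and reading off the relations $B(D(X),Z)=-B(X,D(Z))$, $B(D(P),Q)=-B(P,D(Q))$, and the cross terms, I expect to reduce the free parameters to six. Next I would impose the Leibniz rule $D([U,V])=[D(U),V]+[U,D(V)]$ on the three nontrivial brackets $[X,P]$, $[X,Q]$, $[P,Q]$ (the remaining brackets being trivial and giving redundant or automatically satisfied conditions). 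Each identity yields linear equations on the surviving parameters; combining them should pin down $\Der_a(\g_4)$ to a space of small dimension.

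The key comparison is then with the inner derivations. The inner derivations $\ad(U)$ for $U\in\g_4$ form the image of $\ad:\g_4\to\Der_a(\g_4)$, whose kernel is $\Zs(\g_4)=\CC Z$; hence $\dim\ad(\g_4)=3$. The whole argument comes down to showing that the space of skew-symmetric derivations I computed is exactly three-dimensional, so that the inclusion $\ad(\g_4)\subset\Der_a(\g_4)$ (noted in the text after the definition of $\Der_a$) is an equality. I expect the main obstacle to be confirming that the skew-symmetry constraint, which by itself only forces $D\in\sok(4)$ (dimension $6$), combines with the derivation constraint to drop the dimension all the way down to $3$ rather than leaving a larger space; in other words, the crux is verifying that every extra skew-symmetric parameter allowed by $B$ is killed by the Leibniz rule. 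Once the count $\dim\Der_a(\g_4)=3=\dim\ad(\g_4)$ is established, the lemma follows immediately from $\ad(\g_4)\subseteq\Der_a(\g_4)$.
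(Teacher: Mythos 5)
Your proposal is correct and follows essentially the same route as the paper: use stability of the center and of $[\g_4,\g_4]$ together with skew-symmetry and the Leibniz rule on the three nontrivial brackets to pin down the matrix of $D$. The only cosmetic difference is at the end, where the paper exhibits the explicit preimage $D=\ad(aX-yP+zQ)$ rather than concluding by the dimension count $\dim\Der_a(\g_4)=3=\dim\ad(\g_4)$; both finishes are valid.
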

\begin{proof}
Assume $\g=\spa\{X,P,Q,Z\}$ where $[X,P]=P$, $[X,Q]=-Q$ and $[P,Q]=Z$, the bilinear form $B$ is given by $B(X,Z) = B(P,Q)=1$, the other are zero. Let $D$ be a skew-symmetric derivation of $\g$. Since $\Zs(\g)$ is stable by $D$ then we can assume $D(Z)=xZ$ with $x\in\CC$. Moreover $D$ is skew-symmetric then
\[B(D(X),Z)=-B(X,D(Z)) = B(X,xZ)=-x.\]
So we can assume $D(X)=-xX+yP+zQ+wZ$ with $y,\ z,\ w\in\CC$. The ideal $[\g,\g]$ is also stable by $D$ then we write:
\[ D(P)=aP+bQ+cZ\ \ \text{ and }\ \ D(Q)=a'P+b'Q+c'Z
\]
where $a,\ b,\ c,\ a',\ b',\ c'\in \CC$.

One has $D(P)=D([X,P]) = [D(X),P] + [X,D(P)]$. Then we obtain $x=b=0$ and $z=-c$. By a straightforward calculating, we obtain $y=-c'$, $a'=w=0$, $a=-b'$ and then the matrix of $D$ corresponding to the basis $\{X,P,Q,Z\}$ is:

\[D = \begin{pmatrix} 0 & 0 & 0 & 0 \\ y & a & 0 & 0 \\ z &
  0 & -a & 0\\ 0 & -z & -y & 0\end{pmatrix}.
\] 
It is clear that $D= \ad(aX-yP+zQ)$ and then $D$ is an inner derivation.
\end{proof}

\begin{cor}\label{cor3.9} Any double extension of $\g_4$ by a skew-symmetric derivation is decomposable. 
\end{cor}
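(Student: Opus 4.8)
The plan is to combine the preceding lemma with the decomposition results from Section~1. By Lemma~\ref{lem3.8}, any skew-symmetric derivation $C$ of $\g_4$ is inner, so $C=\ad(A)$ for some fixed $A\in\g_4$. The key observation is that when the derivation defining a double extension is inner, the extra generator $e$ can be modified so as to commute with the original algebra, which splits off a non-degenerate hyperbolic plane and forces decomposability.

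Concretely, I would work in $\bar{\g}=\g_4\oplus\CC e\oplus\CC f$ with the bracket $[e,X]=C(X)=[A,X]$, $[X,Y]_{\bar{\g}}=[X,Y]_{\g_4}+B(C(X),Y)f$, and $[f,\bar{\g}]=0$. First I would set $e':=e-A$ and compute $[e',X]=[e,X]-[A,X]=C(X)-C(X)=0$ for all $X\in\g_4$, so $e'$ is central with respect to $\g_4$. Next I would check the brackets involving $e'$ and $f$: since $f$ is already central and $[e,e]=0$, the element $e'$ still satisfies $[e',f]=0$, and one computes $[e',e']=0$ trivially. Thus $\spa\{e',f\}$ is an Abelian ideal. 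To see it is in fact a non-degenerate ideal I would examine $\bar{B}$: using $\bar{B}(e,f)=1$, $\bar{B}(f,f)=0$ and $\bar{B}(A,f)=0$ (as $A\in\g_4$ and $\bar{B}(\g_4,f)=0$), one gets $\bar{B}(e',f)=\bar{B}(e,f)-\bar{B}(A,f)=1$, so the restriction of $\bar{B}$ to $I:=\spa\{e',f\}$ is non-degenerate.

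It remains to verify that $I$ is genuinely an ideal of $\bar{\g}$, i.e.\ that $[I,\bar{\g}]\subseteq I$; the only brackets to check are $[e',X]$ and $[f,X]$ for $X\in\g_4$, both of which vanish by the above, so $[I,\bar{\g}]=\{0\}$ and $I$ is a (central) non-degenerate ideal. Applying Proposition~\ref{prop1.2} with this $I$, we get $\bar{\g}=I\oplusp I^\perp$ with both factors non-degenerate, which is precisely the statement that $\bar{\g}$ is decomposable in the sense of the definition following Proposition~\ref{prop1.2}.

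I expect the main subtlety to be bookkeeping the bilinear form and the Lie bracket consistently after the substitution $e'=e-A$, since $A$ lives in $\g_4$ while $e$ is a new generator, so one must be careful that replacing $e$ by $e'$ does not disturb the invariance of $\bar{B}$ or the Jacobi identity — but as $e'$ turns out to be central in $\bar{\g}$, these checks are immediate. The only real content is the computation $[e',X]=0$, which is a direct consequence of $C$ being the inner derivation $\ad(A)$; everything else is a routine application of Proposition~\ref{prop1.2}.
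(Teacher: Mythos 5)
Your proof is correct, and it takes a genuinely different (and more general) route than the paper's. The paper substitutes the explicit matrix of $D$ from Lemma~\ref{lem3.8}, writes out all the brackets of $\bar{\g}$ in coordinates, and exhibits the central element $u=-e+aX-yP+zQ$ together with $f$; since $\bar{B}(u,f)=-1$, the center is not totally isotropic, and decomposability follows from the remark at the end of Section~1. Your argument works abstractly with $C=\ad(A)$: the element $e'=e-A$ is exactly $-u$ in the paper's notation (there $D=\ad(aX-yP+zQ)$), but you never need the matrix, and you conclude via the non-degenerate central ideal $\spa\{e',f\}$ and Proposition~\ref{prop1.2} rather than via the totally-isotropic-center criterion. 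This buys you the general statement that \emph{any} one-dimensional double extension by an inner derivation is decomposable --- precisely the result the paper attributes to \cite{FS96} after the corollary --- with Lemma~\ref{lem3.8} reduced to the role of verifying the hypothesis. One small point you should make explicit: in $\bar{\g}$ the bracket of two elements of $\g_4$ is $[A,X]_{\bar{\g}}=[A,X]_{\g_4}+B(C(A),X)f$, so your computation $[e',X]=C(X)-[A,X]_{\bar{\g}}$ picks up the extra term $-B(C(A),X)f$; this vanishes because $C(A)=[A,A]=0$, but as written you silently identify $[A,X]_{\bar{\g}}$ with $C(X)$. With that one line added, the argument is complete.
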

\begin{proof}
 
Keep the basis $\{X,P,Q,Z\}$ of $\g_4$ as in the proof of Lemma \ref{lem3.8} and let $D$ be a skew-symmetric derivation of $\g_4$. By the above lemma, the matrix of $D$ is:

\[D = \begin{pmatrix} 0 & 0 & 0 & 0 \\ y & a & 0 & 0 \\ z &
  0 & -a & 0\\ 0 & -z & -y & 0\end{pmatrix}
\] 
where $a,\ y,\ z\in\CC$. 

Let $\bar{\g}=\g_4\oplus \CC e\oplus \CC f$ be the double extension of $\g_4$ by $D$. Then the Lie bracket is defined on $\bar{\g}$ as follows:
\[[e,X]=yP+zQ,\ \ [e,P] = aP-zZ, \ \ [e,Q]=-aQ-yZ, \ \ [X,P]=P+zf,\]
\[[X,Q]=-Q+yf \ \text{and } [P,Q]=Z+af.
\]
It is easy to check that $u=-e+aX-yP+zQ$ is in $\Zs(\bar{\g})$. Moreover, $f\in\Zs(\bar{\g})$ and $B(u,f)=-1$. Therefore $\Zs(\bar{\g})$ is not totally isotropic and then $\bar{\g}$ is decomposable.
\end{proof}

A more general result of Corollary \ref{cor3.9} can be found in \cite{FS96} where any double extension by an inner derivation is decomposable.

\begin{prop}
Let $(\g,B)$ be a solvable quadratic Lie algebra of dimension 6. Assume $\g$ indecomposable. Then there exists a basis $\{Z_1,Z_2,Z_3,X_1,X_2,X_3\}$ of $\g$ such that the bilinear form $B$ is defined by $B(X_i,Z_j)=\delta_{i,j}$, $1\leq i,j\leq 3$, the other are zero and $\g$ is i-isomorphic to each of Lie algebras as follows:

\begin{enumerate}
	\item[(i)] $\g_{6,1}$: $[X_3,Z_2]=Z_1$, $[X_3,X_1]=-X_2$ and $[Z_2,X_1]=Z_3$,
	\item[(ii)] $\g_{6,2}(\lambda)$: $[X_3,Z_1]=Z_1$, $[X_3,Z_2]=\lambda Z_2$, $[X_3,X_1]=-X_1$, $[X_3,X_2]=-\lambda X_2$, $[Z_1,X_1]=Z_3$ and $[Z_2,X_2]=\lambda Z_3$ where $\lambda\in\CC$ and $\lambda \neq 0$. In this case, $\g_{6,2}(\lambda_1)$ and $\g_{6,2}(\lambda_2)$ is i-isomorphic if and only if $\lambda_2=\pm\lambda_1$ or $\lambda_2={\lambda_1}^{-1}$,
	\item[(iii)] $\g_{6,3}$: $[X_3,Z_1]=Z_1$, $[X_3,Z_2]=Z_1+Z_2$, $[X_3,X_1]=-X_1-X_2$, $[X_3,X_2]=-X_2$ and $[Z_1,X_1]=[Z_2,X_1]=[Z_2,X_2]=Z_3$.
\end{enumerate}

\end{prop}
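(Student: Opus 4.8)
The plan is to run the double-extension reduction of Proposition~\ref{prop3.4} downward from dimension $6$ to dimension $4$, and then read off the answer from the structure of skew-symmetric maps on a $4$-dimensional quadratic vector space. First I would apply Proposition~\ref{prop3.4}: since $\g$ is solvable and indecomposable of dimension $6$ (hence non-Abelian, as $\dim\g>1$), it is a one-dimensional double extension $\bar\g=\g_0\oplus\CC e\oplus\CC f$ of a solvable quadratic Lie algebra $\g_0$ of dimension $4$ by some $C\in\Der_a(\g_0)$. By Proposition~\ref{prop3.2}, $\g_0$ is either Abelian or the diamond algebra $\g_4$, and Corollary~\ref{cor3.9} rules out $\g_0=\g_4$ (the extension would be decomposable). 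Hence $\g_0=\qk$ is the $4$-dimensional Abelian quadratic vector space and $C$ is a nonzero skew-symmetric endomorphism, i.e. a nonzero element of $\ok(\qk)\cong\sok(4)$.

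Next I would recast the problem as a classification of $C$. Computing the bracket on $\bar\g=\qk\oplus\CC e\oplus\CC f$ shows that $f$ is always central and isotropic, that $v\in\qk$ is central iff $Cv=0$, and that $e$ is central iff $C=0$; thus $\Zs(\bar\g)=\ker C\oplus\CC f$. Since $C$ is skew, $\im C=(\ker C)^\perp$, so $\bar\g$ is reduced exactly when $\ker C$ is totally isotropic, i.e. $\ker C\subseteq\im C$. Conjugating $C$ by an element of $\OO(\qk)$ (extended by the identity on $\CC e\oplus\CC f$) and rescaling $C\mapsto\mu C$ (absorbed by $e\mapsto\mu^{-1}e$, $f\mapsto\mu f$) both yield i-isomorphic double extensions; by the classification of these double extensions in \cite{DPU} these are the only i-isomorphisms, so the i-isomorphism classes correspond to the orbits of nonzero skew $C$ under $\OO(4)\times\CC^{*}$, and the $\OO(4)$-orbits of $\sok(4)$ are given by the $B$-compatible Jordan forms of \cite{CM93}.

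I would then enumerate these orbits and retain the reduced ones. Writing $\qk=H_1\oplusp H_2$ as two hyperbolic planes, the nonzero skew $C$ fall, up to $\OO(4)\times\CC^{*}$, into: (a) an invertible semisimple class with eigenvalue datum $\{\pm1,\pm\lambda\}$, $\lambda\neq0$; (b) an invertible class with eigenvalues $\pm1$ each carrying a Jordan block of size $2$; (c) the nilpotent class of type $[2,2]$; and (d) degenerate classes (semisimple with a zero eigenvalue, or nilpotent of type $[3,1]$). In every case of type (d) the kernel contains an anisotropic vector, so $\bar\g$ is not reduced and is decomposable; these are discarded. Cases (a), (b), (c) give exactly $\g_{6,2}(\lambda)$, $\g_{6,3}$, $\g_{6,1}$, with the stated brackets obtained by setting $f=Z_3$, $e=X_3$ and reading $C=\ad(e)|_\qk$. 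Indecomposability then follows from reducedness and the center bound of Proposition~\ref{prop2.8}(ii): any orthogonal splitting $\bar\g=I\oplusp I^\perp$ makes each factor a nonzero reduced solvable quadratic algebra, hence with nonzero center (as $\Zs=[\,\cdot\,,\cdot\,]^\perp$ and the derived ideal is proper for a solvable algebra); comparing $\dim\Zs(I)+\dim\Zs(I^\perp)=\dim\Zs(\bar\g)$ with $\dim\Zs\le\tfrac12\dim$ on each factor forces a low-dimensional Abelian factor whose center would exceed its derived ideal, contradicting Proposition~\ref{prop2.8}(ii).

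The hard part will be the exact i-isomorphism bookkeeping inside the family $\g_{6,2}(\lambda)$. I would realize $\lambda\mapsto-\lambda$ by the hyperbolic swap $Z_1\leftrightarrow X_1$ in $\OO(4)$ and $\lambda\mapsto\lambda^{-1}$ by rescaling $C$ by $\lambda^{-1}$ and relabelling the two planes, giving the claimed equivalence $\lambda_2=\pm\lambda_1$ or $\lambda_2=\lambda_1^{-1}$; the delicate direction is that these exhaust all identifications. Because neither $e$ nor the isotropic central line $\CC f$ is canonical once $\ker C\neq0$, one cannot simply restrict an i-isomorphism to $\qk$; the fact that every i-isomorphism descends to an element of $\OO(\qk)\times\CC^{*}$ acting on $C$ is exactly what \cite{DPU} supplies, after which the unordered datum $\{\pm1,\pm\lambda\}$ modulo scaling is a complete invariant. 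Separating the three families from one another is easier: $\g_{6,1}$ has a $3$-dimensional center whereas $\g_{6,2}(\lambda)$ and $\g_{6,3}$ have $1$-dimensional center, and the latter two are distinguished by the semisimplicity of the defining derivation $C$.
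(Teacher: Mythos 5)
Your proposal is correct and follows essentially the same route as the paper: reduce via Proposition~\ref{prop3.4} and Corollary~\ref{cor3.9} to a double extension of the $4$-dimensional Abelian quadratic vector space, then classify the nonzero skew-symmetric maps $C$ with $\ker C\subseteq\im C$ up to the action of $\OO(\qk)$ and scaling (i.e.\ $\OO(\qk)$-orbits in $\ps(\ok(\qk))$), citing \cite{DPU} for the reduction of i-isomorphism to this orbit problem and \cite{CM93} for the orbit representatives. The extra details you supply (discarding the degenerate orbits because their kernels contain anisotropic vectors, and the explicit indecomposability check) are sound additions but do not change the method.
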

\begin{proof}

Assume $(\g,B)$ an indecomposable solvable quadratic Lie algebra of dimension 6. Then $\g$ is a double extension of a four-dimensional solvable quadratic Lie algebra $\qk$ by a skew-symmetric derivation $C$. By Corollary \ref{cor3.9} and note that $\g$ is indecomposable, $\qk$ must be Abelian. Therefore, $\g$ can be written by $\g = (\CC X_3\oplus\CC Z_3)\oplusp \qk$, where $\qk$ is a four-dimensional quadratic vector space, $C = \ad(X_3)\in\ok(\qk,B_\qk)$, $B(X_3,Z_3) = 1$, $B(X_3,X_3) = B(Z_3,Z_3) = 0$ and $B_\qk = B|_{\qk\times\qk}$ \cite{DPU}. Moreover, the i-isomorphic classification of $\g$ reduces to classify $\OO(\qk)$-orbits of $\ps(\ok(\qk))$, where $\ps(\ok(\qk))$ is denoted by the projective space of $\ok(\qk)$. 

Let $\{Z_1,Z_2,X_1,X_2\}$ be a canonical basis of $\qk$, that means $B_\qk(Z_i,Z_j) = B_\qk(X_i,X_j) = 0$, $B_\qk(Z_i,X_j) = \delta_{ij}$, $1\leq i,j\leq 2$. Since $\g$ is indecomposable then we choose $\OO(\qk)$-orbits of $\ps(\ok(\qk))$ whose representative element $C$ has $\ker(C)\subset\im(C)$ and the matrix of $C$ with respect to the basis $\{Z_1,Z_2,X_1,X_2\}$ is given by one of following matrices:
	
	\begin{enumerate}
		\item[(i)]	$C = \begin{pmatrix} 0 & 1 & 0 & 0 \\ 0 & 0 & 0 & 0 \\ 0 &
  0 & 0 & 0\\ 0 & 0 & -1 & 0\end{pmatrix}$: the nilpotent case,
\item[(ii)] $C(\lambda) = \begin{pmatrix} 1 & 0 & 0 & 0 \\ 0 & \lambda & 0 & 0 \\ 0 &
  0 & -1 & 0\\ 0 & 0 & 0 & -\lambda\end{pmatrix},\ \lambda\neq 0$: the diagonalizable case,
\item[(iii)] $C = \begin{pmatrix} 1 & 1 & 0 & 0 \\ 0 & 1 & 0 & 0 \\ 0 &
  0 & -1 & 0\\ 0 & 0 & -1 & -1\end{pmatrix}$: the invertible case (see \cite{DPU} for more details).
\end{enumerate}

Therefore, we have three not i-isomorphic families of quadratic Lie algebras given as in the proposition which correspond to each of above cases. Note that for those Lie algebras, the i-isomorphic and isomorphic notions are equivalent. For the second family, $\g_{6,2}(\lambda_1)$ is i-isomorphic to $\g_{6,2}(\lambda_2)$ if and only if there exists $\mu\in\CC$ nonzero such that $C(\lambda_1)$ is in the $\OO(\qk)$-adjoint orbit through $\mu C(\lambda_2)$. That happens if and only if $\lambda_1=\pm\lambda_2$ or $\lambda_2={\lambda_1}^{-1}$.

\end{proof}
\bibliographystyle{amsxport}

\begin{bibdiv}
\begin{biblist}
\bib{AB10}{article}{
   author={Ayadi, I.},
   author={Benayadi, S.},
   title={Symmetric Novikov superalgebras},
   journal={J. Math. Phys.},
	fjournal={Journal of Mathematical Physics},
   volume={51},
   number={2},
   date={2010},
   pages={023501},

}

\bib{BB99}{article}{
     author={Benamor, H.},
   author={Benayadi, S.}
   title={Double extension of quadratic Lie superalgebras},
   journal={Comm. in Algebra},
	fjournal={Communication in Algebra},
   volume={27},
   number={1},
   date={1999},
   pages={67 -- 88},

}

\bib{Bor97}{article}{
     author={Bordemann, M.}
     title={Nondegenerate invariant bilinear forms on nonassociative algebras},
   journal={Acta Math. Univ. Comenianae},
   volume={LXVI},
   number={2},
   date={1997},
   pages={151 -- 201},
}

\bib{Bou59}{book}{
   author={Bourbaki, N.},
   title={Eléments de Mathématiques. Algèbre, Formes sesquilinéaires et formes quadratiques},
   volume={Fasc. XXIV, Livre II},
   publisher={Hermann},
   place={Paris},
   date={1959},
   pages={},
}

\bib{CM93}{book}{
   author={Collingwood, D. H.},
   author={McGovern, W. M.},
   title={Nilpotent Orbits in Semisimple Lie. Algebras},
   publisher={Van Nostrand Reihnhold Mathematics Series},
   place={New York},
   date={1993},
   pages={186},
}
\bib{DPU}{article}{
   author={Duong, M.T},
   author={Pinczon, G.},
   author={Ushirobira, R.},
   title={A new invariant of quadratic Lie algebras},
   journal={Alg. Rep. Theory, DOI: 10.1007/s10468-011-9284-4},
	fjournal={Journal of Algebras and Representation Theory},
   volume={},
	 pages={41 pages},
} 
\bib{Kac85}{book}{
   author={Kac, V.},
   title={Infinite-dimensional Lie algebras},
   publisher={Cambridge University Press},
   place={New York},
   date={1985},
   pages={xvii + 280 pp}

}
\bib{FS87}{article}{
   author={Favre, G.},
   author={Santharoubane, L.J.},
   title={Symmetric, invariant, non-degenerate bilinear form on a Lie algebra},
   journal={J. of Algebra},
	fjournal={Journal of Algebra},
   volume={105},
   date={1987},
   pages={451--464},

}
\bib{FS96}{article}{
   author={Figueroa-O'Farrill, J. M.},
	 author={Stanciu, S.},
   title={On the structure of symmetric self-dual Lie algebras},
   journal={J. Math. Phys.},
	fjournal={Journal of Mathematical Physics},
   volume={37},
   number={8},
   date={1996},
   pages={4121 -- 4134},
}

\bib{Med85}{article}{ 
  author={Medina, A.}, 
  title={Groupes de Lie minis de m\'etriques bi-invariantes}, 
  journal={T\^ohoku Math. Journ.}, 
  fjournal={T\^ohoku Mathematical Journal}, 
  volume={37}, 
  date={1985},
  pages={405--421},

}

\bib{MR85}{article}{ 
  author={Medina, A.}, 
  author={Revoy, P.},
  title={Alg\`ebres de Lie et produit scalaire invariant}, 
  journal={Ann. Sci. \'Ecole Norm. Sup.}, 
  fjournal={Annales Scientifiques de l'\'Ecole Normale Sup\'erieure}, 
  volume={4}, 
  date={1985},
  pages={553--561},

}
\bib{PU07}{article}{
   author={Pinczon, G.},
   author={Ushirobira, R.},
   title={New Applications of Graded Lie Algebras to Lie Algebras, Generalized Lie Algebras, and Cohomology},
   journal={J. of Lie Theory},
	fjournal={Journal of Lie Theory},
   volume={17},
   date={2007},
   number={3},
   pages={633 -- 668},

} 

\bib{ZC07}{article}{
   author={Zhu, F.},
   author={Chen, Z.},
   title={Novikov algebras with associative bilinear forms},
	journal={J. Phys. A: Math. Theor.},
   fjournal={Journal of Physics A: Mathematical and Theoretical},
   volume={40},
   date={2007},
   number={47},
   pages={14243--14251},

}

\end{biblist}
\end{bibdiv}

\end{document}